\numberwithin{equation}{section}
\newcommand{\eq}{\begin{equation}}
\newcommand{\qe}{\end{equation}}
\newcommand{\E}{\mathbb{E}}
\newcommand{\R}{\mathbb{R}}
\newcommand{\p}{\mathbb{P}}
\theoremstyle{plain}
\newtheorem{thm}{Theorem}[section]
\newtheorem*{thm*}{Theorem}
\newtheorem{lem}{Lemma}[section]
\newtheorem{prop}{Proposition}[section]
\theoremstyle{definition}
\theoremstyle{remark}
\newtheorem*{rem}{Remark}
\begin{document}
\sloppy
\pagestyle{headings} 
\title{Improved one-sided deviation inequalities under regularity assumptions for product measures}
\date{Note of \today}
\author{Kevin Tanguy \\ University of Angers, France}

\maketitle

\begin{abstract}
This note is concerned with lower tail estimates for product measures. Some improved deviation inequalities are obtained for functions satisfying some regularity and monotonicity assumptions. The arguments are based on semigroup interpolation together with Harris's negative association inequality and hypercontractive estimates.
\end{abstract}


\section{Introduction}
As an introduction we recall some facts about Gaussian concentration of measure (cf. \cite{Led}) and Superconcentration theory (cf. \cite{Chatt1}).\\

It is well known that concentration of measure is an effective tool in various mathematical areas (cf. \cite{BLM}).
 In a Gaussian setting, classical concentration results typically state that, for a Lipschitz function $f\,:\,\R^n\to \R$  with Lipschitz constant $\|f\|_{{\rm Lip}}$, 

\begin{equation}\label{eq.sudakov.tsirelson}
\gamma_n\big(|f-\E_{\gamma_n}[f]|\geq t\big)\leq 2e^{-\frac{t^2}{2\|f\|_{Lip}^2}}, \quad t\geq 0,
\end{equation}

\noindent with $\gamma_n$ the standard Gaussian measure on $\R^n$. Another example of concentration of measure is the Poincar\'e inequality satisfied by $\gamma_n$. Namely, for $f\in L^2(\gamma_n)$ smooth enough : 

\begin{equation}\label{eq.poincare.gaussien}
{\rm Var}_{\gamma_n}(f)\leq \int_{\R^n}|\nabla f|^2d\gamma_n,
\end{equation}

\noindent where $|\cdot|$ stands for the Euclidean norm on $\R^n$. As effective as \eqref{eq.sudakov.tsirelson} and \eqref{eq.poincare.gaussien} are, their generality can lead to sub-optimal bounds in some particular cases. For instance, consider the $1$-Lipschitz function on $\R^n$ $f(x)=\max_{i=1,\ldots,n}x_i$. At the level of the variance, \eqref{eq.poincare.gaussien} gives 

\[
{\rm Var}(M_n)\leq 1, 
\]

\noindent with $M_n=\max_{i=1,\ldots,n}X_i$ where $(X_1,\ldots,X_n)$ stands for a standard Gaussian random vector in $\R^n$, whereas it has been proven that ${\rm Var}(M_n)\leq C/\log n$ with $C>0$ a numerical constant. At an exponential level \eqref{eq.sudakov.tsirelson} is not satisfying either. Indeed, it is well known in Extreme Value theory (cf. \cite{Lead}, pages $14-15$ ) that $M_n$ can be renormalized by some numerical constants   $a_n=\sqrt{2\log n}$ and $b_n=a_n-\frac{\log 4\pi+\log\log n}{2a_n}$, $n\geq 1$, such that

\[
a_n(M_n-b_n)\to \Lambda_0\quad \text{as}\quad  n\to\infty
\]

\noindent in distribution where $\Lambda_0$  corresponds to the Gumbel distribution with cumulative distribution function :

\[
\p(\Lambda_0\leq x)=\exp(-e^{-x}), \quad x\in \R.
\]

\noindent Then, it is clear that the asymptotics of $\Lambda_0$ are not Gaussian but rather exponential on the right tail and double exponential on the left tail. It is now obvious that \eqref{eq.sudakov.tsirelson} and \eqref{eq.poincare.gaussien} lead to sub-optimal results for the function $f(x)=\max_{i=1,\ldots,n}x_i$. This is referred to as Superconcentration phenomenon (cf. \cite{Chatt1}). This kind of phenomenon occurs for different functionals of Gaussian random variables and has been studied in \cite{BT, KT, KT2, Paou, Val}\ldots.\\

Recently, additional convexity assumption has been fruitfully used by Paouris and Valettas in order to improve the concentration inequality \eqref{eq.sudakov.tsirelson}. In the context of small ball probabilities and random Dvoretzky's Theorem, these two authors improved the lower tail of any convex function thanks to Ehrard's inequality in \cite{PaouVal}.  More precisely, they obtained 

\begin{thm}\label{thm.paouris.valettas}[Paouris,Valettas]
Let $f\,:\, \R^n\to \R$ be a convex function, then the following holds

\begin{equation}\label{eq.paouris.valettas}
\gamma_n\bigg(f-\int_{\R^n}fd\gamma_n\leq -t\bigg)\leq e^{-c\frac{t^2}{{\rm Var}_{\gamma_n}(f)}},\quad t>1
\end{equation}
\noindent where $c>0$ is a universal constant.
\end{thm}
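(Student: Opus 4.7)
The plan is to exploit Ehrhard's inequality in order to translate the convexity of $f$ into a concavity property of the one-dimensional Gaussianised distribution function of $f$, from which a sub-Gaussian lower tail with variance scale drops out after anchoring at a quantitatively controlled point.

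First, I would set $F(t) = \gamma_n(f \le t)$ and $\psi(t) = \Phi^{-1}(F(t))$ whenever $F(t) \in (0,1)$, with $\Phi$ the standard normal distribution function. Since $f$ is convex, each sublevel set $A_t = \{f \le t\}$ is convex in $\R^n$, so for $s < t$ and $\lambda \in [0,1]$ one has $\lambda A_s + (1-\lambda)A_t \subseteq A_{\lambda s + (1-\lambda)t}$. Applying Ehrhard's inequality
\[
\Phi^{-1}\big(\gamma_n(\lambda A + (1-\lambda)B)\big) \ge \lambda \Phi^{-1}(\gamma_n(A)) + (1-\lambda)\Phi^{-1}(\gamma_n(B))
\]
to $A = A_s$, $B = A_t$, together with the monotonicity of $\Phi^{-1}$ on $[0,1]$, yields the concavity of $\psi$ on the effective support of $F$.

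Let $m$ be the median of $f$ under $\gamma_n$, so that $\psi(m) = 0$. Writing $m$ as the convex combination $\frac{a}{a+b}(m-b) + \frac{b}{a+b}(m+a)$ for arbitrary $a,b > 0$, concavity of $\psi$ gives
\[
\psi(m-b) \le -\frac{b}{a}\psi(m+a), \qquad F(m-b) \le \Phi\Big(-\frac{b}{a}\psi(m+a)\Big).
\]
To put this to use, I would anchor at $a = C\sqrt{\mathrm{Var}_{\gamma_n}(f)}$ with $C$ a large absolute constant, so that Chebyshev's inequality together with the classical estimate $|m - \E_{\gamma_n}f| \le \sqrt{\mathrm{Var}_{\gamma_n}(f)}$ (coming from the fact that the median minimizes $\E|f-\cdot|$, combined with Jensen and Cauchy--Schwarz) yields $F(m+a) \ge 3/4$. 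This forces $\psi(m+a) \ge c_1 := \Phi^{-1}(3/4) > 0$, and the standard Mill's ratio estimate $\Phi(-u) \le e^{-u^2/2}$ gives
\[
F(m-b) \le \exp\Big(-\frac{c\, b^2}{\mathrm{Var}_{\gamma_n}(f)}\Big)
\]
for some absolute $c>0$.

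The final step is a translation from median to mean: writing $\E_{\gamma_n}f - t = m - b$ with $b = t + (m - \E_{\gamma_n}f)$, the bound $|m - \E_{\gamma_n}f| \le \sqrt{\mathrm{Var}_{\gamma_n}(f)}$ gives $b \ge t - \sqrt{\mathrm{Var}_{\gamma_n}(f)}$, which in the regime $t \ge 2\sqrt{\mathrm{Var}_{\gamma_n}(f)}$ is at least $t/2$ and yields the announced inequality after enlarging $c$. In the opposite regime $t \le 2\sqrt{\mathrm{Var}_{\gamma_n}(f)}$, the target bound $e^{-ct^2/\mathrm{Var}_{\gamma_n}(f)}$ is bounded below by an absolute constant and the restriction $t > 1$ makes the inequality trivially true up to a further adjustment of the constant. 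The principal non-elementary input is really Ehrhard's inequality itself; once the concavity of $\psi$ is in hand, the only delicate point is the tuning of the anchor $a$, which has to be of the exact order $\sqrt{\mathrm{Var}_{\gamma_n}(f)}$ in order to reproduce the variance in the denominator of the exponent, together with the bookkeeping for the median-to-mean correction.
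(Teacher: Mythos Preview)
The paper does not prove this theorem at all: it is quoted from Paouris--Valettas \cite{PaouVal}, with only the remark that the proof goes ``thanks to Ehrhard's inequality.'' Your outline, via the concavity of $\psi=\Phi^{-1}\circ F$ on sublevel sets and an anchor at a point $m+a$ with $a$ of order $\sqrt{\mathrm{Var}_{\gamma_n}(f)}$, is exactly that route, and the main steps are correct.

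One genuine slip in your last paragraph: invoking the hypothesis ``$t>1$'' does nothing. The inequality is scale-invariant under $f\mapsto\alpha f$ (both $t$ and $\sqrt{\mathrm{Var}_{\gamma_n}(f)}$ scale by $\alpha$), so a non-homogeneous threshold like $t>1$ carries no information and cannot rescue the small-$t$ regime. The honest way to close the case $t\le 2\sqrt{\mathrm{Var}_{\gamma_n}(f)}$ is to split once more: if $\E_{\gamma_n}f - t < m$, then $\gamma_n(f\le \E_{\gamma_n}f - t)\le \gamma_n(f<m)\le 1/2\le e^{-4c}$ provided $c\le(\log 2)/4$; the residual sliver $0<t\le \E_{\gamma_n}f - m$ (which forces $t\le\sqrt{\mathrm{Var}_{\gamma_n}(f)}$ by your median--mean estimate) is exactly where the bookkeeping in the original Paouris--Valettas argument is needed, and is not dispatched by the sentence you wrote.
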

\begin{rem}
Of course, the improvements stays in the fact that ${\rm Var}_{\gamma_n}(f)\leq \|f\|_{{\rm Lip}}^2$ as we have just seen on the basic example of the maximum of $n$ independent standard Gaussian random variables. Ehrhard's inequality has also been used by Valettas in \cite{Val} where he proved that \eqref{eq.sudakov.tsirelson} is tight if the convex function $f$ is not superconcentrated. \\

Besides, the work from \cite{PaouVal} has been used by Valettas to extend Theorem \ref{thm.paouris.valettas}. Indeed,  as consequence of his inequality with Paouris, combined with transportation-type arguments, he obtained (cf. \cite{Val}, section $2.1.3$) concentration inequalities for nondecreasing, convex functions  in a log-concave measures setting. 
\end{rem}

The purpose of this note is the following : semigroup's arguments together with Harris negative association Lemma and hypercontractive estimates will be used to obtain a deviation inequality for the lower tail of functions belonging to $\mathcal{F}_+$ where 

\[
\mathcal{F}_+=\{ f\in C^2(\R^n,\R)\quad ;\quad \text{monotone with}\quad \partial_{ij}^2f\geq 0\,\, \forall i,j=1,\ldots,n\}.
\]
The obtained deviation inequalities will be similar to the deviation from Theorem \ref{thm.paouris.valettas} (and its extension). However the class of measures will be different (not necessarily larger) and the proof will be based on interpolation by semigroups arguments. \\

Now, let us describe in more details our setting and state our main result. \\

Let $n\geq 1$ be fixed and consider $\mu=\mu_1\otimes\ldots\otimes\mu_n$ where, for any $i=1,\ldots,n$,  $d\mu_i=e^{-V_i(x)}dx$ are probability measures on $\mathcal{B}(\R)$, the Borel $\sigma$-algebra of $\R$, and $V_i\,:\,\R\to\R$ are smooth potentials. In the sequel, we will assume that there exists $\kappa_i\in \R$ such that

\[
V_i''(x)\geq -\kappa_i\quad \forall x\in\R\quad \text{and}\quad i=1,\ldots,n
\]
and will denote by $\kappa=\max_{i=1,\ldots,n}\kappa_i$. \\

Now, let us recall some facts about functional inequalities and their links with related semigroups. General references on semigroups, functional inequalities and concentration of measures are \cite{BGL,Led, BLM}.\\

In our setting, $d\mu(x)=e^{-V(x)}dx$ is a probability measure on $\mathcal{B}(\R^n)$, the Borel $\sigma$-algebra of $\R^n$, with
\[
 V(x)=\sum_{i=1}^nV_i(x_i)\quad \text{and} \quad x=(x_1,\ldots,x_n)\in\R^n.
 \]
  It is classical that such measures can be seen as an invariant and reversible measure of the associated diffusion operator $L=\Delta-\nabla V\cdot \nabla$. The operator $L$ generates the Markov semigroup of operators $(P_t)_{t\geq 0}$ and defines by integration by parts the Dirichlet form

\begin{equation}\label{eq.ipp}
\mathcal{E}(f,g)=\int_{\R^n}f(-Lg)d\mu=\int_{\R^n}\nabla f\cdot\nabla gd\mu
\end{equation}

\noindent for some smooth functions $f,g$ on $\R^n$. The set of functions for which the preceding expression make sense is called the Dirichlet domain of $L$. We design by $\mathcal{D}(L)$ such set.\\

Given such a couple $(L,\mu)$, it is said to satisfy a spectral gap, or Poincar\'e, inequality if there is a constant $\lambda>0$ such that for all functions $f$ of the Dirichlet domain

\begin{equation}\label{eq.poincare}
\lambda {\rm Var}_{\mu}(f)\leq\mathcal{E}(f,f).
\end{equation}

\noindent with ${\rm Var}_\mu(f)=\int_{\R^n}f^2d\mu-(\int_{\R^n}fd\mu)^2$. Similarly, it satisfies a logarithmic Sobolev inequality if there exists a constant $\rho>0$ such that for all functions $f$ of the Dirichlet domain, 

\begin{equation}\label{eq.logsob}
\rho{\rm Ent}_{\mu}(f^2)\leq 2\mathcal{E}(f,f).
\end{equation}

\noindent with ${\rm Ent}_\mu(f)=\int_{\R^n}f\log fd\mu-(\int_{\R^n}fd\mu)(\log\int_{\R^n}fd\mu)$ and $f>0$.\\

 One speaks of the spectral gap constant (of $(L,\mu)$) as the largest $\lambda>0$ for which \eqref{eq.poincare} holds, and of the logarithmic Sobolev constant (of $(L,\mu)$) as the best $\rho>0$ for which \eqref{eq.logsob} holds. We still use $\lambda$ and $\rho$ to design these constants. It is classical (cf.  \cite{Led}) that $\rho\leq \lambda$.\\

A particular feature of the logarithmic Sobolev inequalities is the (equivalent, cf. \cite{Gro}) hypercontractive property of the semigroup. Precisely, the logarithmic Sobolev inequality \eqref{eq.logsob} is equivalent to saying that, whenever $p\geq 1+e^{2\rho t}$, for all functions $f$ in $L^p(\mu)$, 

\begin{equation}\label{eq.hc}
\|P_t(f)\|_2\leq \|f\|_p
\end{equation}

For simplicity, we say below that a probability measure $\mu$, in this context, is hypercontractive with constant $\rho$.\\

 Finally, let us also recall that an Orlicz norm $\|\cdot\|_{\phi}$ is defined as follow : given a Young function $\phi$, set

\[
\|f\|_{\phi}=\inf\bigg\{c>0\,;\, \int_{\R^n}\phi\bigg(\frac{|f|}{c}\bigg)d\mu\geq 1\bigg\}
\]

\noindent the associated Orlicz norm of a measurable function $f\,:\, \R^n\to \R$. In the sequel, let $\phi\,:\,\R+\to\R_+$ be convex such that $\phi(x)=\frac{x^2}{\log (e+x)}$ for $x\geq 1$ and $\phi(0)=0$. To ease the notation, we set  $\|\nabla f\|_\phi^2$ as a shorthand for $\sum_{i=1}^n\|\partial_i f\|_\phi^2$ with $\partial_i$, for any $i\in\{1,\ldots,n\}$, stands for the $i$-th partial derivative operator.\\

 In this context, the following Theorem is our main result.

\begin{thm}\label{thm2}
Within the preceding framework, assume that $(\mu_i)_{i=1,\ldots,n}$ are hypercontractive with constant $\rho$. Then, for any smooth  $f\in\mathcal{F}_+$ we have 

\begin{equation}\label{eq.thm2}
{\rm Ent}_{\mu}(e^{-f})\leq C_{\rho,\kappa}\|\nabla f\|_{\phi}^2 \E_{\mu}[e^{-f}].
\end{equation}

\noindent where $C_{\rho,\kappa}=\frac{2e^{[1+(\kappa/\rho)]_+}}{\rho(1-e^{-2\rho T})}$ for some $T>0$. In particular, the following holds 

\begin{equation}\label{eq.deviation1}
\mu\bigg(f-\int_{\R^n}fd\mu\leq -t\bigg)\leq  e^{-c_{\rho,\lambda}\frac{t^2}{2\|\nabla f\|_\phi^2}},\quad t\geq 0
\end{equation}
\noindent where $c_{\rho,\lambda}>0$ is a universal constant.
\end{thm}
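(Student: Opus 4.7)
\emph{Plan.} Let $g=e^{-f}$ and denote by $(P_t)_{t\geq 0}$ the Markov semigroup generated by $L=\Delta-\nabla V\cdot\nabla$; it decomposes as a tensor product of the one-dimensional semigroups. The starting point is the semigroup interpolation of entropy: for a parameter $T>0$ to be chosen,
\[
{\rm Ent}_\mu(e^{-f})=\int_0^T\int_{\R^n}\frac{|\nabla P_t g|^2}{P_t g}\,d\mu\,dt+{\rm Ent}_\mu(P_T g),
\]
obtained by integrating $-\frac{d}{dt}{\rm Ent}_\mu(P_t g)=\int|\nabla P_t g|^2/P_t g\,d\mu$ on $[0,T]$. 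The two pieces will be handled by different tools.

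\emph{First piece via commutation and Harris.} The assumption $V_i''\geq -\kappa_i$ together with a Feynman--Kac/commutation argument yields the pointwise gradient bound $|\partial_i P_t g|\leq e^{\kappa_i t}P_t|\partial_i g|$. Combined with the Markov Cauchy--Schwarz inequality $(P_th)^2\leq P_tg\cdot P_t(h^2/g)$ applied to $h=|\partial_i g|$, and with $P_t$-invariance of $\mu$, this gives
\[
\int\frac{(\partial_i P_t g)^2}{P_t g}\,d\mu\leq e^{2\kappa t}\int e^{-f}(\partial_i f)^2\,d\mu,\qquad \kappa=\max_i\kappa_i.
\]
We may assume $f$ is coordinatewise non-decreasing (the opposite case is symmetric); then $\partial_i f\geq 0$, and since $\partial^2_{ij}f\geq 0$ each $(\partial_i f)^2$ is itself coordinatewise non-decreasing, whereas $e^{-f}$ is coordinatewise non-increasing. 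Harris's negative association inequality for the product measure $\mu$ therefore gives $\int e^{-f}(\partial_i f)^2\,d\mu\leq \E_\mu[e^{-f}]\,\E_\mu[(\partial_i f)^2]$.

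\emph{Second piece via hypercontractivity, and the Orlicz refinement.} These steps alone yield a bound in $\|\nabla f\|_2^2$; to replace this by the Orlicz norm $\|\nabla f\|_\phi^2$ one must exploit hypercontractivity of $\mu$ on the residual term ${\rm Ent}_\mu(P_T g)$. The hypercontractive estimate $\|P_T h\|_p\leq \|h\|_2$ for $p=1+e^{2\rho T}$ transfers $L^2$ bounds on the gradient into $L^p$ bounds on $P_T$-smoothed quantities; the Orlicz weight $\phi(x)=x^2/\log(e+x)$ is precisely calibrated so that the $\sqrt{\log p}$ loss incurred when raising moments from $L^2$ to $L^p$ is absorbed by $\phi$. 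After running this through the residual piece, invoking Harris once more, and balancing $e^{2\kappa T}$ against $1-e^{-2\rho T}$ through an appropriate choice of $T$, one arrives at
\[
{\rm Ent}_\mu(e^{-f})\leq C_{\rho,\kappa}\,\|\nabla f\|_\phi^2\,\E_\mu[e^{-f}]
\]
with the announced constant.

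\emph{From MLSI to deviation, and main obstacle.} Applying the modified log-Sobolev inequality to $\lambda f$ ($\lambda>0$, still in $\mathcal{F}_+$) and using $\|\nabla(\lambda f)\|_\phi=\lambda\|\nabla f\|_\phi$ yields a differential inequality for the log-Laplace transform $\Lambda(\lambda)=\log\E_\mu[e^{-\lambda(f-\E_\mu f)}]$. Herbst's argument integrates it to $\Lambda(\lambda)\leq C\lambda^2\|\nabla f\|_\phi^2$, and a Chebyshev--Markov bound optimized in $\lambda$ delivers \eqref{eq.deviation1}. The main obstacle is the hypercontractive bookkeeping in the second step: the elementary Harris step naturally produces $\|\nabla f\|_2^2$, and promoting this to the strictly smaller $\|\nabla f\|_\phi^2$, while preserving dimension-free, explicit constants in $\rho$ and $\kappa$, is the delicate, non-routine part of the argument.
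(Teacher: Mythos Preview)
Your outline has a genuine gap at precisely the place you flag as the ``main obstacle'', and the remedy you suggest does not work. Once you apply the Markov Cauchy--Schwarz inequality $(P_t h)^2\leq P_t g\cdot P_t(h^2/g)$ in the first piece, the $P_t$ acting on the gradient is integrated out (after using invariance of $\mu$), and you irrevocably commit the factor $\E_\mu[(\partial_i f)^2]=\|\partial_i f\|_2^2$. No treatment of the residual ${\rm Ent}_\mu(P_T g)$ can then lower the total below a multiple of $\|\nabla f\|_2^2\,\E_\mu[e^{-f}]$, because the first piece alone already costs that much. Your claim that hypercontractivity applied to the residual entropy promotes $\|\nabla f\|_2^2$ to $\|\nabla f\|_\phi^2$ is therefore incorrect: hypercontractivity enters somewhere else entirely.

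Here is what the paper actually does. First, the residual is not treated separately; one uses the exponential decay of entropy under log-Sobolev, ${\rm Ent}_\mu(P_T g)\leq e^{-2\rho T}{\rm Ent}_\mu(g)$, to absorb it into the left-hand side, producing the factor $(1-e^{-2\rho T})^{-1}$. Second, and this is the key point you miss, Harris's inequality is applied \emph{at the level of the heat kernel} $p_t(x,\cdot)$ (a product measure): since $\partial_i f$ is monotone non-decreasing and $e^{-f}$ non-increasing, and $P_t$ preserves monotonicity, one has pointwise $P_t(\partial_i f\,e^{-f})\leq P_t(\partial_i f)\,P_t(e^{-f})$. This keeps $P_t(\partial_i f)$ intact. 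A second application of Harris at the level of $\mu$ then factors out $\E_\mu[e^{-f}]$, and one is left with
\[
\int_0^T e^{2\kappa t}\,\|P_t(\partial_i f)\|_2^2\,dt.
\]
It is on \emph{this} quantity that hypercontractivity acts (following Cordero-Erausquin and Ledoux): $\|P_t h\|_2\leq\|h\|_{p(t)}$ with $p(t)=1+e^{-2\rho t}\to 1$, and integrating in $t$ converts $\int_0^T e^{2\kappa t}\|P_t(\partial_i f)\|_2^2\,dt$ into $C_{\rho,\kappa}\|\partial_i f\|_\phi^2$. The Cauchy--Schwarz step you use destroys exactly the object on which hypercontractivity needs to act. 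Your Herbst argument for the deviation inequality is fine.
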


\begin{rem}
\begin{enumerate}
\item In practice, it is classical to bound (cf. \cite{CoLed} )$\|\nabla f\|_{\phi}^2$ by the following quantity :

\[
\|\nabla f\|_{\phi}^2=\sum_{i=1}^n\|\partial_if\|_{\phi}^2\leq C\sum_{i=1}^n\frac{\|\partial_i f\|_2^2}{1+\log \|\partial_i f\|_1/\|\partial_i f\|_2}
\]
with $C>0$ a numerical constant.
\item When, the standard Gaussian measure is considered 
\[
i.e. \quad V_i(x)=\frac{x^2}{2},\quad i=1,\ldots,n\quad \text{and}\quad x\in\R
\]
 the quantity $\|\nabla f\|_{\phi}^2$ can be replaced by the variance ${\rm Var}_{\gamma_n}(f)$ which is smaller.
\end{enumerate}
\end{rem}

We want to highlight the fact that only $\kappa\in \R$ is required here, it appears as a mild property shared by numerous potentials such as, for example, double-wells potentials on the line of the form $V(x)=ax^4-bx^2,\, a,b>0$. The stronger strict convexity assumption $V''\geq \rho>0$ (satisfied by the standard Gaussian measure $\gamma_n$) actually implies that $\mu$ satisfies a logarithmic Sobolev inequality, and thus hypercontractivity, with constant $\rho$ (cf. \cite{BGL}).\\

To better understand where the improvement lies in Theorem \ref{thm2} Let us recall some facts : for a smooth function $f\,:\,\R^n\to\R$ it is known (cf. the introduction of \cite{Val} and references therein) that

\[
{\rm Var}_\mu(f)\leq \|\nabla f\|_{\phi}^2\leq \E_\mu[|\nabla f|^2]\leq \|f\|_{{\rm Lip}}^2
\]

\noindent and each terms can be different from one another.  For instance (cf. \cite{KT1,BT}), in a Gaussian case, if $f(x)=\max_{i=1,\ldots,n}x_i$ 

\[
{\rm Var}_{\gamma_n}(f)\simeq \frac{1}{\log n}\simeq \|\nabla f\|_\phi^2\quad \text{and} \quad \E_{\gamma_n}[|\nabla f|^2]=\|f\|_{{\rm Lip}}^2=1.
\]

If $f(x)={\rm Med}(x_1,\ldots,x_n)$, we have 

\[
{\rm Var}_{\gamma_n}(f)\simeq \frac{1}{n},\quad \|\nabla f\|_\phi^2\simeq\frac{1}{\log n}\quad \text{and} \quad \E_{\gamma_n}[|\nabla f|^2]=\|f\|_{{\rm Lip}}^2=1.
\]

Let us mention that  \eqref{eq.sudakov.tsirelson} has already been improved for convex functions, with $\E_{\gamma_n}[|\nabla f|^2]$ instead of $\|f\|_{{\rm Lip}}^2$, by Paouris and Valettas (cf. \cite{PaouVal3} section $5.2$). Thus, in Theorem \ref{thm2}, we obtain something slightly better. However, this bound
is a priori larger (except for the Gaussian case) than the one involving ${\rm Var}_\mu(f)$ which would be the desired one for every $\mu$.\\

Now, let us describe the organization of the article. Section \ref{2} is concerned with semigroup facts and negative association. In section \ref{4} we prove Theorem \ref{thm2}. Section \ref{4} will describe some potential extensions. Finally, in section \ref{5}, we say a few words about a recent result from \cite{Dung}. \\

In the sequel, we will always assume that the functions are sufficiently integrable with respect to $\mu$ in order that studied inequalities make sense and the commutation between integrals and derivatives are legit. Also, by convention, $C>0$ is a numerical constant that may change at each occurence.

\section{Tools}\label{2}

\subsection{Semigroup properties}
In this section, we present the tools needed to prove Theorem \ref{thm2}. In the context described in the introduction, let us collect some important properties of the semigroup $(P_t)_{t\geq 0}$. Again, for more details,  the reader is referred to \cite{BGL} (or \cite{Led2}, pages $306-328$, for a shorter exposition).

\begin{prop}
Within the preceding framework, the following holds 
\begin{enumerate}
\item [$\bullet$] For any smooth function $f\,:\R^n\to\R$, the semigroup $(P_t)_{t\geq 0}$ solves the heat equation associated to $L$. 

\begin{equation}\label{eq.heat.equation}
 i.e. \quad \partial_tP_t(f)=LP_t(f)=P_t(Lf)\quad  \text{for any}\quad  t\geq 0.
 \end{equation}
\item [$\bullet$] $(P_t)_{t\geq 0}$ is ergodic : for any smooth function $f\,:\,\R^n\to\R$
\begin{equation}\label{eq.ergodicity}
\lim_{t\to+\infty}P_t(f)=\E_\mu[f]
\end{equation}
\item [$\bullet$] For any $i=1,\ldots,n$ and any smooth function $f\,:\,\R^n\to\R$, the uniform lower bound $V''_i\geq -\kappa_i$, is equivalent to the following commutation property
\begin{equation}\label{eq.commutation}
|\partial_iP_t(f)|\leq e^{\kappa t}P_t(|\partial_if|)\quad \text{for any}\quad  t\geq 0
\end{equation}
\end{enumerate}
\end{prop}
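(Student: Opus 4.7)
Plan: I would treat the three items in turn, exploiting throughout the product structure $\mu=\mu_1\otimes\cdots\otimes\mu_n$, which tensorises the dynamics as $P_t=P_t^{(1)}\otimes\cdots\otimes P_t^{(n)}$ with each $P_t^{(i)}$ the one-dimensional semigroup generated by $L_i=\partial_i^2-V_i'(x_i)\partial_i$. Property \eqref{eq.heat.equation} is the defining feature of the semigroup associated to $L$: since $-L$ is a non-negative self-adjoint operator on $L^2(\mu)$ by \eqref{eq.ipp}, one sets $P_t=e^{tL}$ via the spectral theorem, and the identities $\partial_tP_tf=LP_tf=P_tLf$ are the standard commutation properties of the functional calculus. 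For ergodicity \eqref{eq.ergodicity}, hypercontractivity of each $\mu_i$ with constant $\rho$ is equivalent (Gross) to \eqref{eq.logsob} with constant $\rho$, which implies \eqref{eq.poincare} with the same constant, and this tensorises to $\mu$. Combining with the heat equation and \eqref{eq.ipp} yields $\tfrac{d}{dt}{\rm Var}_\mu(P_tf)=-2\mathcal{E}(P_tf,P_tf)\leq -2\rho\,{\rm Var}_\mu(P_tf)$, hence ${\rm Var}_\mu(P_tf)\leq e^{-2\rho t}{\rm Var}_\mu(f)$; this gives $L^2$-convergence of $P_tf$ to $\E_\mu[f]$, upgraded to pointwise convergence for smooth integrable $f$ by the regularising effect of $P_t$.

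The commutation \eqref{eq.commutation} is the substantial step. The tensor factorisation of $P_t$ reduces the claim to its one-dimensional form: it is enough to show that, for any smooth $g:\R\to\R$,
\begin{equation*}
|(P_t^{(i)}g)'|\leq e^{\kappa_i t}\,P_t^{(i)}(|g'|).
\end{equation*}
Applying this to the partial function $y\mapsto f(x_1,\ldots,x_{i-1},y,x_{i+1},\ldots,x_n)$ and then bounding $e^{\kappa_i t}\leq e^{\kappa t}$ delivers \eqref{eq.commutation}. To prove the one-dimensional inequality I argue by interpolation. A direct computation yields the commutator identity $[\partial,L_i]=-V_i''(x_i)\partial$, so that $h_s:=(P_{t-s}^{(i)}g)'$ satisfies $\partial_sh_s=-L_ih_s+V_i''(x_i)h_s$. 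Introducing the regularisation $\Phi_\varepsilon(u)=\sqrt{u^2+\varepsilon^2}$ and $\psi(s):=P_s^{(i)}(\Phi_\varepsilon(h_s))$, the chain rule together with $L_iP_s^{(i)}=P_s^{(i)}L_i$ produces
\begin{equation*}
\partial_s\psi(s)=P_s^{(i)}\bigl(\Phi_\varepsilon''(h_s)(h_s')^2+V_i''(x_i)\,h_s\Phi_\varepsilon'(h_s)\bigr)\geq -\kappa_i\,\psi(s),
\end{equation*}
where the first summand is non-negative by convexity of $\Phi_\varepsilon$ and the bound on the second follows from $V_i''\geq -\kappa_i$ combined with $0\leq u\Phi_\varepsilon'(u)\leq \Phi_\varepsilon(u)$ (valid once one reduces to $\kappa_i\geq 0$, which is harmless since $\kappa_i$ can be replaced by $\max(\kappa_i,0)$). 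Integrating on $[0,t]$ and then sending $\varepsilon\to 0$ gives the claim; the converse direction (necessity of $V_i''\geq-\kappa_i$) is obtained by Taylor-expanding both sides at $t=0^+$ and identifying first-order terms pointwise.

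The main obstacle is this one-dimensional interpolation: the absolute value on the left-hand side forces a smooth regularisation of $|\cdot|$, and the decisive point is to recognise that the $L_i$-term generated by the chain rule cancels exactly against the intertwining $L_iP_s^{(i)}=P_s^{(i)}L_i$, isolating the single curvature term through which the hypothesis $V_i''\geq -\kappa_i$ enters. This is precisely the $\Gamma_2$-calculus mechanism underlying Bakry--\'Emery theory, and it is the only place in the whole proposition where the curvature lower bound is actually used.
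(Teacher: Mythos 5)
The paper does not actually prove this proposition; it is quoted as standard material with a pointer to Bakry--Gentil--Ledoux and Ledoux's survey. Your argument is precisely the classical one those references give: the spectral-theoretic definition of $P_t=e^{tL}$ for the heat equation, exponential decay of the variance via the spectral gap for ergodicity, and the Bakry--\'Emery interpolation $s\mapsto P_s(\Phi_\varepsilon(\partial P_{t-s}g))$ with the commutator $[\partial,L_i]=-V_i''\partial$ for the gradient bound, reduced to one dimension by tensorisation. All of that is sound, including the reduction of \eqref{eq.commutation} to the one-dimensional statement and the Taylor expansion at $t=0^+$ for the converse.

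One technical point deserves correction. Your parenthetical claim that reducing to $\kappa_i\geq 0$ is ``harmless since $\kappa_i$ can be replaced by $\max(\kappa_i,0)$'' actually proves a strictly weaker inequality, namely $|(P_t^{(i)}g)'|\leq e^{[\kappa_i]_+t}P_t^{(i)}(|g'|)$. When $\kappa_i<0$ (uniformly convex potential) this loses the exponential decay $e^{\kappa_i t}$, which is exactly the regime the paper cares about elsewhere (for $\gamma_n$ it uses $e^{-t}$, see \eqref{eq.ornstein.commutation}); the asserted \emph{equivalence} with $V_i''\geq-\kappa_i$ would also fail, since the converse direction recovers only $V_i''\geq -[\kappa_i]_+$. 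The fix is cheap and does not require discarding negative curvature: from $u\Phi_\varepsilon'(u)=\Phi_\varepsilon(u)-\varepsilon^2/\sqrt{u^2+\varepsilon^2}$ one gets
\begin{equation*}
V_i''\,h_s\Phi_\varepsilon'(h_s)\;\geq\;-\kappa_i\,h_s\Phi_\varepsilon'(h_s)\;\geq\;-\kappa_i\,\Phi_\varepsilon(h_s)-|\kappa_i|\,\varepsilon,
\end{equation*}
so $\partial_s\psi\geq-\kappa_i\psi-|\kappa_i|\varepsilon$, Gronwall yields $\psi(t)\geq e^{-\kappa_i t}\psi(0)-C_t\varepsilon$, and letting $\varepsilon\to0$ gives the sharp bound with $e^{\kappa_i t}$ for every sign of $\kappa_i$. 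With that amendment your proof is complete and matches the standard one the paper defers to.
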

 \begin{rem}
 When $\mu=\gamma_n$ the commutation property \eqref{eq.commutation} is exact (cf. \cite{Led2, BGL}) Namely, for any $i=1,\ldots,n$ and any smooth function $f\,:\,\R^n\to\R$
 
 \begin{equation}\label{eq.ornstein.commutation}
 \partial_iP_t(f)=e^{-t}P_t(\partial_if)\quad \text{for any}\quad  t\geq 0.
 \end{equation}
 This fact can also be checked on the representation formula \eqref{eq.mehler} given in the sequel.
 \end{rem}

\subsection{Semigroup representation of the Entropy}
As it will be needed in the sequel, we state below some representation (cf. \cite{BGL} section $5.5$ or section $2.1$ in \cite{Led2}) of the entropy of a function along the semigroup $(P_t)_{t\geq 0}$. \\

\begin{equation}\label{eq.representation.entropy1}
{\rm Ent}_{\mu}(f^2)=\int_0^{+\infty}\int_{\R^n} \frac{|\nabla P_t (f^2)|}{P_t (f^2)}d\mu dt
\end{equation}

As it is exposed in \cite{CoLed}, when $\mu$ satisfies a logarithmic Sobolev inequalities there is no need to deal with large value of $t$ in \eqref{eq.representation.entropy1}. Indeed, a logarithmic Sobolev inequalities is equivalently stated as a exponential decay of the entropy along the semigroup. Namely, 

\begin{equation}\label{eq.entropy.decay}
{\rm Ent}_\mu\big(P_t(f)\big)\leq e^{-2t\rho}{\rm Ent}_\mu(f)\quad \text{for every}\quad t\geq 0
\end{equation}
\noindent and every positive function $f$ in $L^1(\mu)$. Therefore, the combination of the preceding representation \eqref{eq.representation.entropy1} by semigroup together with the exponential decay of the the entropy (cf. \cite{BGL} page $244$) along the semigroup we have, for any $T> 0$,

\begin{equation}\label{eq.representation.entropy2}
{\rm Ent}_{\mu}(f^2)\leq \frac{1}{1-e^{-2\rho T}}\int_0^{T}\int_{\R^n} \frac{|\nabla P_t (f^2)|}{P_t (f^2)}d\mu dt
\end{equation}
In the sequel, we choose e.g. $T=\frac{1}{2\rho}$.
 \subsection{Semigroup and Harris inequality}
 
 As mentioned earlier, in order to investigate the lower tail, one has to use negative association inequalities. Therefore we state below Harris's Lemma (cf.\cite{BLM} page $43$) and see how it can be combined with semigroups. Recall that monotonicity or convexity properties of a function $f\,:\, \R^n\to \R$ are understood coordinate-wise.

\begin{prop}[Harris's negative association inequality]\label{prop.harris}
Let $f\,:\,\R^n\to\R$ and $g\,:\,\R^n\to\R$  two monotone functions with different monotonicity, then 

\begin{equation}\label{eq.harris}
\E\big[f(X)g(X)\big]\leq \E\big[f(X)\big]\E\big[g(X)\big]\quad \text{for} \quad X=(X_1,\ldots,X_n)
\end{equation}

with $X_i$ independent random variables.
\end{prop}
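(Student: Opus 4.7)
The plan is a standard induction on the number of independent coordinates $n$, which is the classical route to Harris's (or FKG-type) inequality. Assume without loss of generality that $f$ is coordinate-wise nondecreasing and $g$ is coordinate-wise nonincreasing.

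For the base case $n=1$, I would introduce an independent copy $X_1'$ of $X_1$ and exploit the trivial pointwise inequality
\[
\bigl(f(X_1)-f(X_1')\bigr)\bigl(g(X_1)-g(X_1')\bigr)\leq 0,
\]
which holds on the whole probability space because the two factors always have opposite signs (when $X_1\geq X_1'$ the first factor is $\geq 0$ and, by opposite monotonicity, the second is $\leq 0$, and vice versa). Taking expectation and expanding, the independence of $X_1$ and $X_1'$ yields
\[
2\,\E[f(X_1)g(X_1)]-2\,\E[f(X_1)]\,\E[g(X_1)]\leq 0,
\]
which is the claim in dimension one.

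For the inductive step, I would fix $x_n\in\R$ and freeze the last coordinate in both functions. Applying the inductive hypothesis to the $(n-1)$-variable functions $(x_1,\dots,x_{n-1})\mapsto f(x_1,\dots,x_{n-1},x_n)$ and $(x_1,\dots,x_{n-1})\mapsto g(x_1,\dots,x_{n-1},x_n)$, which still have opposite monotonicity, I get
\[
\E\bigl[f(X_1,\dots,X_{n-1},x_n)\,g(X_1,\dots,X_{n-1},x_n)\bigr]\leq F(x_n)\,G(x_n),
\]
where $F(x_n):=\E[f(X_1,\dots,X_{n-1},x_n)]$ and $G(x_n):=\E[g(X_1,\dots,X_{n-1},x_n)]$ are, respectively, nondecreasing and nonincreasing in $x_n$ since monotonicity is preserved by integration of the other coordinates (whose distributions do not depend on $x_n$ by independence). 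Conditioning on $X_n$, using Fubini, and then applying the one-dimensional case to the pair $(F,G)$ with respect to the law of $X_n$ gives
\[
\E[f(X)g(X)]\leq \E[F(X_n)G(X_n)]\leq \E[F(X_n)]\,\E[G(X_n)]=\E[f(X)]\,\E[g(X)],
\]
which is \eqref{eq.harris}.

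The only delicate point is really the verification that $F$ inherits the monotonicity of $f$ in the last variable, but this is immediate from independence: for $x_n\leq x_n'$, one has $f(\cdot,x_n)\leq f(\cdot,x_n')$ pointwise, and integrating against the product law of $(X_1,\dots,X_{n-1})$ preserves the inequality. Hence no genuine obstacle arises; the argument is an entirely elementary two-step scheme (pointwise opposite-sign trick in dimension one, plus induction via conditioning), and it requires no regularity on $f,g$ beyond measurability and suitable integrability, so it applies in the generality needed later in the paper.
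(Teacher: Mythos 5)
The paper does not prove this proposition at all; it simply cites Harris's lemma from \cite{BLM} (page 43). Your argument --- the opposite-sign trick with an independent copy in dimension one, followed by induction on $n$ via conditioning on the last coordinate and noting that the partial expectations $F$ and $G$ inherit opposite monotonicities --- is precisely the standard textbook proof of Harris's/Chebyshev's association inequality, and it is correct and complete (the reduction to $f$ nondecreasing and $g$ nonincreasing in every coordinate is legitimate, since one may flip the sign of any coordinate without affecting independence).
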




In the sequel,  this proposition will also be used at the level of the semigroup. That is to say for the underlying heat kernel measure $p_t(x,dy)$ which is defined (cf. \cite{BGL} page $12$) as
\[
P_t(f)(x)=\int_{\R^n}f(y)p_t(x,dy)\quad  \text{with}\quad t\geq 0\quad \text{and}\quad  x\in\R^n
\]

This is the content of the following Lemma.
\begin{lem}\label{lem.harris.semigroup}
 Let $t\geq 0$ and $x\in\R^n$ be fixed and consider $f$ and $g$ two monotone functions with different monotonicity, then

\[
P_t(fg)(x)\leq P_t(f)(x)P_t(g)(x)
\]
\end{lem}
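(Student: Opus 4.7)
The plan is to exploit the product structure of $\mu$ to conclude that the heat kernel $p_t(x,\cdot)$ itself is a product measure, and then apply Proposition \ref{prop.harris} fiberwise.

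First, I would observe that since $V(x) = \sum_{i=1}^n V_i(x_i)$, the generator splits as $L = \sum_{i=1}^n L_i$ with $L_i = \partial_{ii}^2 - V_i'(x_i)\partial_i$ acting only on the $i$-th coordinate, and the $L_i$ commute. Consequently the semigroup factorizes,
\[
P_t = P_t^{1} \otimes \cdots \otimes P_t^{n},
\]
where each $P_t^i$ is the one-dimensional semigroup with invariant measure $\mu_i$. Equivalently, the heat kernel at any fixed $x=(x_1,\ldots,x_n)$ is the product measure
\[
p_t(x,dy) = p_t^{1}(x_1,dy_1)\otimes \cdots \otimes p_t^{n}(x_n,dy_n).
\]

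Next, fix $t\geq 0$ and $x\in\R^n$, and let $Y=(Y_1,\ldots,Y_n)$ be a random vector distributed according to $p_t(x,\cdot)$. By the factorization just noted, the coordinates $Y_1,\ldots,Y_n$ are independent. Since $f$ and $g$ are monotone coordinate-wise with opposite monotonicities, so are the functions $y\mapsto f(y)$ and $y\mapsto g(y)$ as functions of the independent variables $Y_1,\ldots,Y_n$. Applying Harris's inequality \eqref{eq.harris} to $Y$ then yields
\[
\E\bigl[f(Y)g(Y)\bigr] \leq \E\bigl[f(Y)\bigr]\,\E\bigl[g(Y)\bigr],
\]
which is exactly the pointwise inequality $P_t(fg)(x)\leq P_t(f)(x)P_t(g)(x)$.

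There is essentially no obstacle here beyond making the product-structure reduction precise; the content of the lemma is really just Harris's inequality applied to the product heat-kernel measure rather than to $\mu$ itself. The only point to watch is to ensure that the preservation of coordinate-wise monotonicity under integration in the other variables is done cleanly, but this follows immediately from the fact that integrating a monotone function against an independent copy in the remaining coordinates preserves monotonicity in the distinguished one.
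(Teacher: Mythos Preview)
Your argument is correct and matches the paper's own (implicit) approach: the paper does not spell out a proof but simply remarks that Harris's inequality is to be applied to the heat kernel measure $p_t(x,\cdot)$, and you have supplied the one missing detail---namely that the separable potential $V(x)=\sum_i V_i(x_i)$ forces the generator, hence the semigroup and the heat kernel, to factorize as a product, so that Proposition~\ref{prop.harris} applies verbatim.
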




The following Lemma explains, in our context, that the semigroup $(P_t)_{t\geq 0}$ preserves monotonicity properties of a function.

 \begin{lem}\label{lem.intertwinnings}
Let $f\,:\,\R^n\to \R$ be monotone, then $x\mapsto P_t(f)(x), \, t\geq 0$ shares the same monotonicity properties as the function $f$.
\end{lem}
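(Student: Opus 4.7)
The plan is to exploit the product structure of $\mu$ in two ways: first to split the semigroup across coordinates, and then to reduce the monotonicity-preservation statement to a one-dimensional fact about each marginal semigroup.

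Since $V(x)=\sum_{i=1}^n V_i(x_i)$ is separable, the generator decomposes as $L=\sum_{i=1}^n L_i$ with $L_i=\partial_i^2-V_i'(x_i)\partial_i$ acting only on the $i$-th coordinate. The operators $L_i$ pairwise commute, hence $P_t=P_t^{(1)}\circ\cdots\circ P_t^{(n)}$, where $P_t^{(i)}$ denotes the one-dimensional semigroup generated by $L_i$. Equivalently, letting $(X_t^{(i),x_i})_{t\geq 0}$ be the solution of the 1D SDE $dX_t=\sqrt{2}\,dB_t^{(i)}-V_i'(X_t)\,dt$ with $X_0=x_i$, the Brownian motions $(B^{(i)})_{i=1}^n$ being independent, one has $P_tf(x)=\E\bigl[f\bigl(X_t^{(1),x_1},\ldots,X_t^{(n),x_n}\bigr)\bigr]$.

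Next, I would invoke the classical pathwise comparison principle for one-dimensional SDEs. Since $V_i$ is smooth with $V_i''\geq -\kappa_i$, the drift $-V_i'$ is locally Lipschitz (and one-sided Lipschitz), so pathwise uniqueness holds; driving the SDE with the same Brownian motion $B^{(i)}$ from two initial conditions $x\leq y$, the trajectories cannot cross, for at a would-be crossing time pathwise uniqueness would force them to coincide thereafter. Combined with the initial ordering and continuity, this gives $X_t^{(i),x}\leq X_t^{(i),y}$ almost surely for every $t\geq 0$. Consequently, each $P_t^{(i)}$ preserves the cone of non-decreasing (resp.\ non-increasing) functions on $\R$.

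The conclusion is then immediate. Assume for definiteness that $f$ is non-decreasing in each coordinate and fix $x\leq y$ coordinate-wise. Driving the $n$ coordinates independently as above, the previous step yields $X_t^{(j),x_j}\leq X_t^{(j),y_j}$ almost surely for every $j$, so the monotonicity of $f$ gives
\[
f\bigl(X_t^{(1),x_1},\ldots,X_t^{(n),x_n}\bigr)\leq f\bigl(X_t^{(1),y_1},\ldots,X_t^{(n),y_n}\bigr)\quad\text{a.s.},
\]
and taking expectations delivers $P_tf(x)\leq P_tf(y)$. The non-increasing and mixed cases are handled identically coordinate by coordinate. The only subtle ingredient is the 1D comparison, for which the one-sided bound $V_i''\geq -\kappa_i$ (the sole regularity assumed on the potentials) is exactly what is needed; an equivalent analytic route via the Bismut-type identity $\partial_xP_t^{(i)}h(x)=\E\bigl[h'(X_t^{(i),x})\exp\bigl(-\int_0^tV_i''(X_s^{(i),x})\,ds\bigr)\bigr]$, combined with the product factorization, yields the same conclusion and dovetails naturally with the commutation property \eqref{eq.commutation} used elsewhere in the paper.
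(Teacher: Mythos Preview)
Your argument is correct, but it follows a different route from the paper. The paper proceeds in one line via the Bismut--type intertwining identity
\[
\nabla P_tf(x)=\E\Bigl[\nabla f(X_t)\,e^{-\int_0^t V''(X_s)\,ds}\Bigm| X_0=x\Bigr],
\]
from which the sign of each $\partial_i P_tf$ is read off directly from the sign of $\partial_i f$. This is exactly the ``analytic route'' you mention in your last sentence; in the paper it is the entire proof, not an aside. Your main argument instead uses the product factorization $P_t=\prod_i P_t^{(i)}$ together with the one-dimensional pathwise comparison principle for SDEs, which is a genuine coupling approach. Both rely on the same structural hypothesis (separable potential with $V_i''\geq -\kappa_i$): in the paper it guarantees the intertwining representation, in your version it guarantees pathwise uniqueness and hence the monotone coupling. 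Your approach has the mild advantage of not requiring $f$ to be differentiable (only monotone), whereas the paper's formula needs $\nabla f$; on the other hand the paper's proof is shorter and dovetails more directly with the commutation property~\eqref{eq.commutation} used later.
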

\begin{proof}
As it is exposed in \cite{MalTal}, in our setting, we have the following representation of $\nabla P_tf(x)$ for any $x\in\R^n$ and $t\geq 0$.

\begin{equation}\label{eq.intertwinnings}
\nabla P_tf(x)=\E\big[\nabla f(X_t)e^{-\int_0^t V''(X_s)ds}\big| X_0=x\big]
\end{equation}

\noindent Thus, $x\mapsto P_tf(x)$ shares the same monotonicity properties as $f$.
\end{proof}
\begin{rem}
\begin{enumerate}
\item In the Gaussian setting, for quadratic potentials, this property is obvious thanks to Mehler's formula which gives an explicit representation of the Ornstein-Uhlenbeck semigroup : 

\begin{equation}\label{eq.mehler}
P_tf(x)=\int_{\R^n}f(xe^{-t}+\sqrt{1-e^{-2t}}y)d\gamma_n(y),\quad t\geq 0,\, x\in\R^n
\end{equation}

\item Representation as \eqref{eq.intertwinnings} is part of the so-called intertwinnings relation between a semigroup with some differential operator (cf.\cite{JouBon1,JouBon2} and references therein).\\
\item The fact that a semigroup preserves the monotonicity of a function has also been investigate in \cite{Page}.
\end{enumerate}
\end{rem}

\section{Study of the lower tail - Proof of Theorem \ref{thm2}}\label{3}

Recall that the measures $(\mu_i)_{i=1,\ldots,n}$ are assumed to be hypercontractive with constant $\rho$. In this section we prove Theorem \ref{thm2} thanks to Lemma \ref{lem.harris.semigroup} and \ref{lem.intertwinnings}.

\begin{proof}
Let $f\in\mathcal{F}_+$ be. Without loss of generality, we can assume that $f$ is non-decreasing : i.e. $\partial_if\geq 0$ for all $i\in\{1,\ldots,n\}$. Then, start with the representation formula \eqref{eq.representation.entropy2}

\begin{equation*}
{\rm Ent}_{\mu}(f^2)\leq \frac{1}{1-e^{-2\rho T}}\int_0^{T}\int_{\R^n} \frac{|\nabla P_t (f^2)|}{P_t (f^2)}d\mu dt
\end{equation*}

\noindent and apply it to $e^{-f}$. We obtain, thanks to the commutation properties \eqref{eq.commutation},

\begin{eqnarray*}
{\rm Ent}_{\mu}(e^{-f})&\leq&\frac{1}{1-e^{-2\rho T}}\int_0^T e^{2\kappa t}\sum_{i=1}^n\int_{\R^n}\frac{P_t^2(\partial_ife^{- f})}{P_t (e^{- f})}d\mu dt
\end{eqnarray*}
Notice that, for any $\in\{1,\ldots,n\}$, $\partial_if$ and $e^{-f}$ are monotone with different monotonicity. Therefore, by Lemma \ref{lem.intertwinnings}, this is also the case for $P_t(\partial_if)$ and $P_t(e^{-f})$. Then, by applying Lemma \ref{lem.harris.semigroup} twice, we get

\begin{eqnarray*}
{\rm Ent}_{\mu}(e^{-\theta f})&\leq &\frac{1}{1-e^{-2\rho T}}\int_0^T e^{2\kappa t}\sum_{i=1}^n\int_{\R^n}P_t^2(\partial_if)\frac{P_t^2(e^{- f})}{P_t (e^{- f})}d\mu dt\\
&\leq &\frac{1}{1-e^{-2\rho T}}\E_{\mu}[e^{- f}]\times\int_0^T e^{2\kappa t}\sum_{i=1}^n\int_{\R^n}P_t^2(\partial_if)d\mu dt
\end{eqnarray*}
where in the last upper bound we used that $\mu$ is the invariant measure of $(P_t)_{t\geq 0}$. Namely, $\E_{\mu}[P_t(h)]=\E_\mu[h]$ for any smooth functions $h\,:\,\R^n\to\R$.\\

Finally, in the preceding inequality, the last factor can be upper bounded by hypercontractive arguments. To this task, we follow the proof of Talagrand's inequalities exposed in \cite{CoLed} (pages 8-9) in order to obtain
\begin{equation}\label{eq.hc1}
\int_0^T e^{2\kappa t}\sum_{i=1}^n\int_{\R^n}P_t^2(\partial_if)d\mu\leq \frac{2e^{[1+(\kappa/\rho)]_+}}{\rho}\sum_{i=1}^n\|\partial_if\|_{\phi}^2
\end{equation}

 \noindent To sum up, we have proven

\[
{\rm Ent}_{\mu}(e^{-f})\leq C_{\rho,\kappa}\|\nabla f\|_\phi^2\E_{\mu}[e^{-f}].
\]
with $C_{\rho,\kappa}=\frac{2e^{[1+(\kappa/\rho)]_+}}{\rho(1-e^{-2\rho T})}$. The deviation inequality is classically obtained by applying the preceding inequality to $e^{-\theta f}$ with $\theta\geq 0$.
\end{proof}
\begin{rem}
Let us notice that the preceding scheme of proof can also be done at the level of the variance with the dynamical representation (used in \cite{CoLed})

\[
{\rm Var}_{\mu}(f)=2\int_0^{\infty}\int_{\R^n} |\nabla P_t (f)|^2d\mu dt.
\]
Furthermore, when $\mu=\gamma_n$, one can choose $T=+\infty$. Then, thanks to the exact commutation property \eqref{eq.ornstein.commutation} between $\nabla$ and $(P_t)_{t\geq 0}$ together with the preceding dynamical representation of the variance, we get 

\[
{\rm Ent}_{\gamma_n}(e^{-f})\leq \E_{\gamma_n}[e^{-f}]{\rm Var}_{\gamma_n}(f)
\]
\end{rem}

\section{Potential extensions}\label{4}

Let us say a few words about some potential extensions. As it was emphasized in \cite{CoLed}, one key features  of the preceding methodology is the following. Given a Markov semigroup $(P_t)_{t\geq 0}$ with generator $L$ and invariant measure $\mu$. Assume that $(L,\mu)$ is hypercontractive and that the associated Dirichlet form $\mathcal{E}$ may be decomposed along directions $\Gamma_i$ acting on functions on some state space $E$ as 

\[
\mathcal{E}(f,f)=\sum_{i=1}^n\int_{E}\Gamma_i^2(f)d\mu
\]

\noindent in a way that, for each $i=1,\ldots,n$, $\Gamma_i$ commutes to $(P_t)_{t\geq 0}$ in the sense that, for some constant $\kappa\in \R$, every $t\geq 0$ and $f$ smooth enough, 

\begin{equation}\label{eq.commutation2}
\Gamma_i(P_t f)\leq e^{\kappa t}P_t\big(\Gamma_i (f)\big).
\end{equation}

In the current article, this commutation property is obtained as a strong gradient bound from Bakry and Emery's Gamma 2 criterion and is stated in \eqref{eq.commutation}. \\

As a first example, one can investigate the standard exponential measure (or gamma measure) $d\mu=e^{-\sum_{i=1}^nx_i}1_{\{x_1\geq 0\}}\ldots1_{\{x_n\geq 0\}}dx_1\ldots dx_n$ on $\R^n_+$ with the direction $\Gamma_i(f)=\sqrt{x_i}\partial_i$. According to \cite{BGL, KT}, the commutation properties \eqref{eq.commutation2} is satisfied with $\kappa=-1$. Now, observe that the operator $\Gamma_i,\,i=1,\ldots,n$ preserves the key features of the function $f$. More precisely, assume $f\in\mathcal{F}_+$, then it is easy to check that $x_i\mapsto \Gamma_i(f)\in\mathcal{F}_+$. Besides the following identity, for any $\theta\in\R$, holds
\[
 \Gamma_i(e^{\theta f})=\theta e^{\theta f}\Gamma_i(f).
 \]
  \noindent Therefore, it is possible to apply Harris's negative association \ref{prop.harris} in this situation.\\

Indeed, in this setting, it is then easy to extend slightly the result of the current article. Following the lines of the proof of our main result, we obtain 
\begin{eqnarray*}
{\rm Ent}_{\mu}(e^{-f})&\leq&\frac{1}{2-e^{-2\lambda T}}\int_0^Te^{2\kappa t}\sum_{i=1}^n\int_{\R^n}P_t^2\big(\Gamma_i(f)e^{- f}\big)d\mu dt\\
&\leq &\E_{\mu}[e^{- f}]\times\bigg(\frac{1}{2-e^{-2\lambda T}}\int_0^Te^{2\kappa t}\sum_{i=1}^n\int_{\R^n}P_t^2(\Gamma_i(f))d\mu dt\bigg)\\
&\leq &\E_\mu[e^{- f}]\times C_{\rho,\kappa}\|\Gamma(f)\|_{\phi}^2
\end{eqnarray*}

\noindent where $\|\Gamma (f)\|_{\phi}^2$ is a shorthand for $\sum_{i=1}^n\|\Gamma_i(f)\|_{\phi}^2$. Notice also, according to \cite{CoLed}, that hypercontractive estimates also yields the following upper bound

\[
\|\Gamma (f)\|_{\phi}^2\leq C\sum_{i=1}^n\frac{\|\Gamma_i f\|_2^2}{1+\log\big( \|\Gamma_i f\|_1/\|\Gamma_i f\|_2\big)}
\]
\noindent with $C>0$ a numerical constant. It is obvious that the same proof holds at the level of the variance.\\

As exposed in \cite{CoLed}, non-product measures can also be investigated. For instance, if $\mu$ stands for the uniform probability measure on the sphere $\mathbb{S}^{n-1}$, one may consider the following fact
\[
\mathcal{E}(f,f)=\int_{\mathbb{S}^{n-1}}f(-\Delta f)d\mu=\frac{1}{2}\sum_{i,j=1}\int_{\mathbb{S}^{n-1}}(D_{i,j}f)^2d\mu
\]

\noindent where the direction $D_{ij}=x_i\partial_j-x_j\partial_i,\, i,j=1,\ldots,n$. The operators $D_{ij}$ commute in an essential way to the spherical Laplacian $\Delta=\frac{1}{2}\sum_{i,j=1}^nD_{ij}^2$ so that \eqref{eq.commutation2} holds with $\kappa=0$. However, the monotone properties needed in the proof (in order to apply Harris's negative association inequality) seems more complicated to easily characterized. \\

\section{About the upper tail}\label{5}

A similar result as Theorem \ref{thm.paouris.valettas} or \ref{thm2} has also been obtained in \cite{Dung}. Instead of convexity, the author of \cite{Dung} assumes that $f$ belongs to the set 

\[
\mathcal{F}_-=\{ f\in C^2(\R^n,\R)\quad ;\quad \text{monotone with}\quad \partial_{ij}^2f\leq 0\,\, \forall i,j=1,\ldots,n\}
\]
and obtained the following deviation inequality 

\begin{thm}[Nguyen Tien]\label{thm.dung}
Let $f\in \mathcal{F}$ be, then the following holds

\begin{equation}\label{eq.dung}
\gamma_n\bigg(f-\int_{\R^n}fd\gamma_n\geq t\bigg)\leq e^{-\frac{t^2}{{\rm Var}_{\gamma_n}(f)}},\quad t\geq 0
\end{equation}
 \end{thm}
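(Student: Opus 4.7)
The plan is to run precisely the argument of Theorem \ref{thm2}, but now in the ``opposite'' direction so as to control the exponential moments of $f$ itself (rather than of $-f$) and then extract the upper tail via Herbst's argument. Since $\gamma_n$ is invariant under coordinate-wise sign changes, we may assume $f$ non-decreasing in every coordinate; the hypothesis $\partial^2_{ij}f\leq 0$ then says that each partial $\partial_i f\geq 0$ is non-increasing in every coordinate. The Gaussian logarithmic Sobolev inequality allows us to take $T=+\infty$ in the entropy representation, so that
\begin{equation*}
{\rm Ent}_{\gamma_n}(e^{f}) = \int_0^{+\infty}\int_{\R^n}\frac{|\nabla P_t(e^{f})|^2}{P_t(e^{f})}\,d\gamma_n\,dt.
\end{equation*}

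Applying the exact Ornstein--Uhlenbeck commutation \eqref{eq.ornstein.commutation} twice, with Lemma \ref{lem.harris.semigroup} sandwiched in between, one obtains
\begin{equation*}
\partial_i P_t(e^{f}) = e^{-t}P_t\bigl(e^{f}\partial_i f\bigr)\leq e^{-t}P_t(e^{f})P_t(\partial_i f)= P_t(e^{f})\,\partial_i P_t(f),
\end{equation*}
the middle inequality being legitimate because $e^f$ is non-decreasing while $\partial_i f$ is non-increasing in each coordinate. Squaring, summing over $i$ and dividing by $P_t(e^f)$ yields the pointwise estimate
\begin{equation*}
\frac{|\nabla P_t(e^{f})|^2}{P_t(e^{f})}\leq P_t(e^{f})\,|\nabla P_t f|^2.
\end{equation*}

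Integrating against $\gamma_n$ now calls for a second use of negative association, this time at the level of the measure (Proposition \ref{prop.harris}). By Lemma \ref{lem.intertwinnings}, $P_t(e^{f})$ is non-decreasing coordinate-wise. On the other hand, each $\partial_i P_t f = e^{-t}P_t(\partial_i f)$ is non-negative and non-increasing in every coordinate (again by Lemma \ref{lem.intertwinnings}), hence $(\partial_i P_t f)^2$ is non-increasing, and the same property is preserved by summation, giving a non-increasing $|\nabla P_t f|^2$. Harris's inequality therefore produces
\begin{equation*}
\int_{\R^n}P_t(e^{f})|\nabla P_t f|^2\,d\gamma_n \leq \E_{\gamma_n}[e^{f}]\int_{\R^n}|\nabla P_t f|^2\,d\gamma_n.
\end{equation*}
Integrating over $t\in[0,+\infty)$ and invoking the dynamical representation ${\rm Var}_{\gamma_n}(f)=2\int_0^\infty\!\int|\nabla P_t f|^2\,d\gamma_n\,dt$ delivers the modified log-Sobolev estimate ${\rm Ent}_{\gamma_n}(e^{f})\leq C\,{\rm Var}_{\gamma_n}(f)\,\E_{\gamma_n}[e^{f}]$ for a numerical constant $C>0$. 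Substituting $\theta f$ for $f$ with $\theta\geq 0$ and running Herbst's argument yields a sub-Gaussian bound $\E_{\gamma_n}[e^{\theta(f-\E_{\gamma_n}[f])}]\leq e^{C'\theta^2{\rm Var}_{\gamma_n}(f)}$, whence \eqref{eq.dung} follows (up to the explicit numerical constant) by Markov's inequality and optimization over $\theta$.

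The main subtlety, and essentially the only place where the assumption $f\in\mathcal{F}_-$ is used, is the second Harris step: one needs the monotonicity of $|\nabla P_t f|^2$ coordinate-wise, which requires that every $\partial_i f$ be both non-negative \emph{and} monotone in the same direction (here non-increasing), so that squaring and then summing both preserve monotonicity. Without the sign condition $\partial^2_{ij}f\leq 0$ for all pairs $(i,j)$ the field $|\nabla P_t f|^2$ would generically have mixed monotonicity and the coupling with $P_t(e^{f})$ through Harris would break down.
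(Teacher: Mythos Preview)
Your argument is correct and in fact recovers the same moment bound $\E_{\gamma_n}[e^{\theta(f-m)}]\leq e^{\theta^2{\rm Var}_{\gamma_n}(f)/2}$ as the paper (if you track your constants you will find $C=1/2$, since $\int_0^\infty\!\int|\nabla P_tf|^2\,d\gamma_n\,dt=\tfrac12{\rm Var}_{\gamma_n}(f)$, and Herbst then gives exactly~\eqref{eq.herbst}). The route, however, is different from the one the paper presents.

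The paper does \emph{not} go through the entropy representation or Harris's inequality for this theorem. Instead it reformulates Dung's original argument via the covariance operator
\[
T_g(y)=\int_0^\infty \E_{\gamma_n}\bigl[\nabla f(X)\cdot\nabla P_t(g)(y)\bigr]\,dt,
\]
shows the identity $\psi'(\theta)=\theta\,\E_{\gamma_n}[e^{\theta(f-m)}T_f]$ (Lemma~\ref{lem.thm1}), and then proves $\E_{\gamma_n}\!\bigl[e^{\theta(f-m)}(T_f-{\rm Var}_{\gamma_n}(f))\bigr]\leq 0$ by applying the identity once more and computing $\partial_i T_f$ directly: the condition $\partial_{ij}^2f\leq 0$ enters through the sign of the integrand, not through any correlation inequality. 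This yields $\psi'(\theta)\leq\theta\,{\rm Var}_{\gamma_n}(f)\,\psi(\theta)$ and hence~\eqref{eq.herbst}. Your proof, by contrast, stays entirely within the Harris\,/\,semigroup machinery of Section~\ref{3}: you replace the single Harris step of Theorem~\ref{thm2} by two (one at the level of the heat kernel, one at the level of $\gamma_n$), the second of which exploits that $|\nabla P_tf|^2$ is coordinate-wise monotone when $f\in\mathcal F_-$. This is arguably the more natural companion to the paper's main theorem and makes transparent why $\mathcal F_-$ is the ``right'' class for the upper tail; the paper's $T_g$ approach is shorter and needs only the exact commutation~\eqref{eq.ornstein.commutation}, but it is specific to the Gaussian case, as the author himself notes.
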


We want to highlight the fact that the arguments used in \cite{Dung} can be easily expressed in terms of semigroup arguments. As we focus on the Gaussian case, notice that $(P_t)_{t\geq 0}$ stands for the Ornstein-Uhlenbeck semigroup. This reformulation gives shorter proof as we will show in the sequel. Unfortunately, the strategy presented below relies on exact commutation and can not be extended to the measure $\mu$. \\




Following \cite{Dung}, introduce the operator $T_g$ defined as follows

\[
T_g(y)=\int_0^\infty \E_{\gamma_n}[\nabla f(X)\cdot\nabla P_t (g)(y)]dt\quad \text{with}\quad  y\in \R^n
\]
where $f\,:\,\R^n\to \R$ is fixed, $g\,:\,\R^n\to\R$ is centered under $\gamma_n$ and $\mathcal{L}(X)=\gamma_n$.

\begin{lem}\label{lem.thm1}
With the preceding notations, for any $\theta\geq 0$, we have

\[
\E_{\gamma_n}[e^{\theta f}g]=\theta\E_{\gamma_n}[e^{\theta f}T_g]
\]
\end{lem}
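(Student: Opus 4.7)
The plan is to chain together Fubini's theorem, the Gaussian chain rule $\theta e^{\theta f}\nabla f = \nabla e^{\theta f}$, integration by parts via the Dirichlet form, the heat equation, and the ergodicity of the Ornstein--Uhlenbeck semigroup. With $T_g$ read as $\int_0^\infty \nabla f(y)\cdot \nabla P_tg(y)\,dt$ (the quantity that actually appears in the right-hand side once the outer $\E_{\gamma_n}[e^{\theta f}\cdot]$ is applied), the computation is essentially a direct semigroup interpolation.

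First, I would pull the time integral outside with Fubini, so that
\[
\theta\,\E_{\gamma_n}\!\bigl[e^{\theta f}\,T_g\bigr] \;=\; \int_0^{\infty}\theta\,\E_{\gamma_n}\!\bigl[e^{\theta f}\,\nabla f\cdot \nabla P_t g\bigr]\,dt,
\]
and inside the integrand use $\theta e^{\theta f}\nabla f = \nabla e^{\theta f}$ to rewrite the expectation as $\E_{\gamma_n}[\nabla e^{\theta f}\cdot \nabla P_t g]$.

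Second, using the Dirichlet form identity \eqref{eq.ipp} specialised to $\mu=\gamma_n$ and the heat equation \eqref{eq.heat.equation}, each time-slice becomes
\[
\E_{\gamma_n}\!\bigl[\nabla e^{\theta f}\cdot\nabla P_t g\bigr]\;=\;\E_{\gamma_n}\!\bigl[e^{\theta f}(-L)P_t g\bigr]\;=\;-\,\partial_t\,\E_{\gamma_n}\!\bigl[e^{\theta f}P_t g\bigr].
\]
Thus the outer $dt$-integration telescopes through the fundamental theorem of calculus. Invoking the ergodicity \eqref{eq.ergodicity} together with the fact that $g$ is centered, so that $P_t g\to \E_{\gamma_n}[g]=0$ as $t\to+\infty$, yields
\[
\int_0^\infty -\,\partial_t\,\E_{\gamma_n}\!\bigl[e^{\theta f}P_t g\bigr]\,dt \;=\; \E_{\gamma_n}\!\bigl[e^{\theta f}P_0 g\bigr]-\lim_{t\to\infty}\E_{\gamma_n}\!\bigl[e^{\theta f}P_t g\bigr] \;=\; \E_{\gamma_n}\!\bigl[e^{\theta f}g\bigr],
\]
which is the desired identity.

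The only genuinely delicate point is the justification of the interchanges and of the vanishing boundary term at $t=+\infty$. By Cauchy--Schwarz, the limit is controlled by $\|e^{\theta f}\|_{L^2(\gamma_n)}\,\|P_t g\|_{L^2(\gamma_n)}$, and for a centered $g\in L^2(\gamma_n)$ the Gaussian Poincaré inequality \eqref{eq.poincare.gaussien} gives the exponential decay $\|P_t g\|_{L^2(\gamma_n)}\leq e^{-t}\|g\|_{L^2(\gamma_n)}$; these are guaranteed under the standing integrability assumptions of the paper. Fubini's application in the first step is then justified by the same integrability bounds combined with the exact commutation $\nabla P_t g = e^{-t}P_t\nabla g$ of \eqref{eq.ornstein.commutation}.
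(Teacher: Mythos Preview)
Your proof is correct and essentially identical to the paper's: both use ergodicity, the heat equation, integration by parts via the Dirichlet form, and the chain rule, only you start from the right-hand side and telescope back while the paper starts from $\E_{\gamma_n}[e^{\theta f}g]=\E_{\gamma_n}[e^{\theta f}(P_0g-P_\infty g)]$ and unwinds forward. Your explicit justification of Fubini and of the vanishing boundary term at $t=+\infty$ is extra care the paper leaves implicit.
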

\begin{proof}
Since $g$ is centered under $\gamma_n$ and by ergodicity \eqref{eq.ergodicity} of $(P_t)_{t\geq 0}$, we have

\[
\E_{\gamma_n}[e^{\theta f}(g-\E_{\gamma_n}[g])]=\E_{\gamma_n}[e^{\theta f}(P_0(g)-P_{\infty}(g))]
\] 
Thus, by the fundamental Theorem of calculus, we have 

\begin{eqnarray*}
\E_{\gamma_n}[e^{\theta f}g]&=&\E_{\gamma_n}\bigg[e^{\theta f}\bigg(-\int_0^\infty\frac{d}{dt}P_t(g) dt\bigg)\bigg]\\
&=&\int_{0}^\infty \E_{\gamma_n}\big[ e^{\theta f}\big(-LP_t(g)\big)\big]dt\quad\quad \text{by}\quad \eqref{eq.heat.equation}\\
&=&\int_0^\infty \E_{\gamma_n}[\nabla e^{\theta f}\cdot \nabla P_t(g)]dt\quad\quad \text{by}\quad \eqref{eq.ipp}\\
&=& \theta \E_{\gamma_n}[e^{ \theta f}T_g]
\end{eqnarray*}

\end{proof}
\begin{rem}
The use of the operator $T_g$ was the main idea of the article \cite{Dung}, we state it in a slightly different way which avoids a lot of calculus. For further purposes, observe that $\E_{\gamma_n}[T_g]={\rm Cov}_{\gamma_n}(f,g)$. In particular, 
\begin{equation}\label{eq.variance}
\E_{\gamma_n}[T_f]={\rm Var}_{\gamma_n}(f).
\end{equation}
\end{rem}

As in \cite{Dung}, the proof of Theorem \ref{thm.dung} relies on Lemma \ref{lem.thm1}.\\

For notational convenience, set $m=\int_{\R^n}fd\gamma_n$. Then, for any $\theta\geq 0$ define $\psi(\theta)=\E_{\gamma_n}[e^{\theta(f-m)}]$. From Lemma \ref{lem.thm1}, we have 

\[
\psi'(\theta)=\E_{\gamma_n}[e^{\theta(f-m)}(f-m)]=\theta\E_{\gamma_n}[e^{\theta(f-m)}T_f].
\]

\noindent Besides, 
\[
\theta\E_{\gamma_n}[e^{\theta(f-m)}T_f]=\theta\E_{\gamma_n}\bigg[e^{\theta(f-m)}\big(T_f-{\rm Var}_{\gamma_n}(f)\big)\bigg]+\theta{\rm Var}_{\gamma_n}(f)\E_{\gamma_n}[e^{\theta(f-m)}].
\]
 To conclude, it is enough to show that $\E_{\gamma_n}\bigg[e^{\theta(f-m)}\big(T_f-{\rm Var}_{\gamma_n}(f)\big)\bigg]\leq 0$. Indeed, if it is the case we have 

\[
\psi'(\theta)\leq \theta {\rm Var}_{\gamma_n}(f)\psi(\theta)
\]
Once integrated, this differential inequality yields 

\begin{equation}\label{eq.herbst}
\E_{\gamma_n}[e^{\theta (f-m)}]\leq e^{{\rm Var}_{\gamma_n}(f)\frac{\theta^2}{2}}\quad \text{for all}\quad \theta\geq 0.
\end{equation}
Finally, the deviation inequality from Theorem \ref{thm.dung} is obtained by classical arguments : one has to use Chernoff inequality and optimize in $\theta\geq 0$.\\

Now, let us show that $\E_{\gamma_n}\bigg[e^{\theta(f-m)}\big(T_f-{\rm Var}_{\gamma_n}(f)\big)\bigg]\leq 0$. To this task, use Lemma \ref{lem.thm1} with $g=T_f-{\rm Var}_{\gamma_n}(f)$ (which, according to \eqref{eq.variance}, is centered under $\gamma_n$) to get 

\[
\E_{\gamma_n}\bigg[e^{\theta(f-m)}\big(T_f-{\rm Var}_{\gamma_n}(f)\big)\bigg]=\theta\E_{\gamma_n}[e^{\theta(f-m)}T_{T_f}]
\]
Now, let us investigate $T_{T_f}$, thanks to the exact commutation property  \eqref{eq.ornstein.commutation}, we have, for any $y\in\R^n$ (omitted here),

\begin{eqnarray*}
T_{T_f}&=& \int_0^\infty e^{-t}\E_{\gamma_n}[\nabla f\cdot P_t(\nabla T_f)]dt\\
&=&\int_0^\infty e^{-t}\sum_{i=1}^n\E_{\gamma_n}[\partial_if P_t(\partial_i T_f)]dt
\end{eqnarray*}
Besides, for any $i=1,\ldots, n$, 

\begin{eqnarray*}
\partial_i T_f=\partial_i \int_0^\infty \sum_{j=1}^n\E_{\gamma_n}[\partial_j f\partial_jP_t(f)]dt
&= & \int_0^\infty \sum_{j=1}^n\E_{\gamma_n}[\partial_{ij}^2 f\partial_j P_t(f)+\partial_jf\partial_{ij}^2P_t(f)]dt\\
&=& \int_0^\infty \sum_{j=1}^ne^{-t}\E_{\gamma_n}[\partial_{ij}^2fP_t(\partial_j f)+e^{-t}\partial_j fP_t(\partial_{ij}^2 f)]dt\\
&\leq & 0\\
\end{eqnarray*}
since $f\in\mathcal{F}_-$. Thus, $\E_{\gamma_n}\bigg[e^{\theta(f-m)}\big(T_f-{\rm Var}_{\gamma_n}(f)\big)\bigg]\leq 0$ and the proof is complete.\\

\begin{rem}
Theorem \ref{thm.dung} implicitly uses a covariance identity (through the operator $T_f$). Similar identities have been used in \cite{Hou3} for infinitely divisible random vectors having finite exponential moments. In particular, sharp deviation inequalities were obtained. We wonder if our result can be extend to this level of generality.\\
\end{rem}

\textit{Acknowledgment : I would like to thank P. Valettas for several comments and precious remarks. I warmly thank the referee for helpful comments in improving the exposition.}

\end{document}